\documentclass[12pt,reqno]{amsart} 
\usepackage{amsfonts,amsmath,amsthm,amssymb}
\usepackage{fullpage}
\usepackage{hyperref}
\usepackage{tikz}

\usepackage{mathtools}

\newtheorem{theorem}{Theorem}

\newtheorem{corollary}[theorem]{Corollary}
\newtheorem{remark}[theorem]{Remark}

\newcommand{\PF}{\textrm{PF}}
\newcommand{\Lucky}{\text{Lucky}}

\definecolor{black}{RGB}{0,0,0}
\definecolor{orange}{RGB}{230,159,0}
\definecolor{skyblue}{RGB}{86,180,233}
\definecolor{bluishgreen}{RGB}{0,158,115}
\definecolor{yellow}{RGB}{240,228,66}
\definecolor{blue}{RGB}{0,114,178}
\definecolor{vermillion}{RGB}{213,94,0}
\definecolor{reddishpurple}{RGB}
{204,121,167}
\definecolor{grayish}{RGB}{211, 211, 211}
\definecolor{darkgray}{RGB}{128, 128, 128}

\begin{document}

\subjclass[2020]{Primary 05A05; Secondary 68P10}

\keywords{Parking Functions, Lucky Cars, Sorting}

\title{Lucky Cars and the \texttt{Quicksort} Algorithm}

\author[Harris]{Pamela E. Harris}
\address[P.~E. Harris]{Department of Mathematical Sciences, University of Wisconsin-Milwaukee, Milwaukee, WI 53211}
\email{\textcolor{blue}{\href{mailto:peharris@uwm.edu}{peharris@uwm.edu}}}
\thanks{P.~E.~Harris was supported through a Karen Uhlenbeck EDGE Fellowship.}

\author{Jan Kretschmann}
\address[J. Kretschmann]{Department of Mathematical Sciences, University of Wisconsin Milwaukee, Milwaukee, WI 53211}
\email{\textcolor{blue}{\href{mailto:kretsc23@uwm.edu}{kretsc23@uwm.edu}}}

\author[Mart\'inez Mori]{J. Carlos Mart\'{i}nez Mori}
\address[J.C. Mart\'inez Mori]{Schmidt Science Fellows}
\email{\textcolor{blue}{\href{mailto:jmartinezmori@schmidtsciencefellows.org}{jmartinezmori@schmidtsciencefellows.org}}}
	
\maketitle

\begin{abstract}
\texttt{Quicksort} is a classical divide-and-conquer sorting algorithm.
It is a comparison sort that makes an average of $2(n+1)H_n - 4n$ comparisons on an array of size $n$ ordered uniformly at random, where $H_n \coloneqq \sum_{i=1}^n\frac{1}{i}$ is the $n$th harmonic number.
Therefore, it makes $n!\left[2(n+1)H_n - 4n\right]$ comparisons to sort all possible orderings of the array.
In this article, we prove that this count also enumerates 
the parking preference lists of $n$ cars parking on a one-way street with $n$ parking spots resulting in exactly $n-1$ lucky cars (i.e., cars that park in their preferred spot). 
For $n\geq 2$, both counts satisfy the second order recurrence relation
$    f_n=2nf_{n-1}-n(n-1)f_{n-2}+2(n-1)!
$ with $f_0=f_1=0$.
\end{abstract}

\section{Introduction}
A common student experience is to form a line where people are sorted by their height, say with the shortest student on the left and the tallest student on the right.
One approach to sort the students is to have students join the line one at a time, each finding their place in line by \emph{comparing} their height to that of those already in it. 
Thereby one at a time, students self-sort and the end result yields the desired student ordering.

As inconspicuous as this activity may seem, sorting algorithms and their use of computational resources (e.g., comparisons) are subject to vast amounts of scientific attention.
One such algorithm, aptly called \texttt{Quicksort}, was developed by was developed by Tony Hoare---who would later become Sir Tony Hoare--- in 1961 while he was a British Council exchange student visiting Moscow State University~\cite{foley1971proofCormen,hoare1961algorithm,jones2021theories}.
\texttt{Quicksort} is a classical sorting algorithm and a prime example of the \emph{divide-and-conquer} paradigm. 
By now, its design and analysis is covered in most undergraduate textbooks on algorithm design (e.g.,~\cite{cormen2009introduction,kleinberg2006algorithm,knuth1998art, sedgewick2014algorithms}, to list a few).

The algorithm first bipartitions the array elements by selecting a \emph{pivot} element:
elements less than or equal to the pivot element are assigned to a ``left'' array, whereas elements greater than the pivot element are assigned to a ``right'' array. 
In this way, given an array with $n \in \mathbb{N} \coloneqq \{1,2,3,\ldots\}$ elements, the algorithm first makes $n - 1$ pivot comparisons.
Then, the algorithm recurses \emph{separately} on the left and right arrays to obtain their respective sorted versions\footnote{
In practice, \texttt{Quicksort} is implemented in conjunction with non-recursive sorting algorithms that work well on small arrays.
This and other practical optimizations of the algorithm are described by Sedgewick in~\cite{sedgewick1978implementing}.
}.

In Figure \ref{fig:quicksort-ex}, we illustrate an execution of the algorithm as described in \cite[Chapter 13.5]{kleinberg2006algorithm}.
(A more detailed description, with precise array-indexing and swap operations, can be found in \cite[Chapter 7]{cormen2009introduction}).

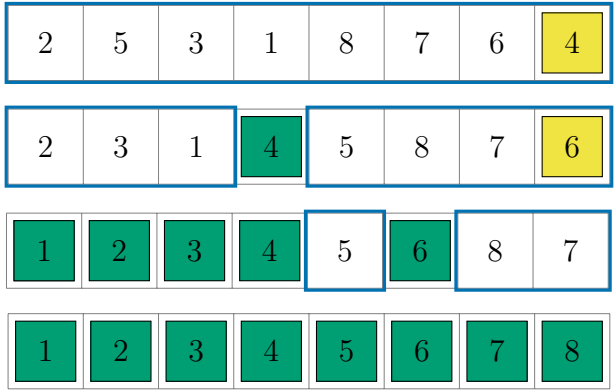
\begin{figure}
    \centering
    \begin{tikzpicture}
        \draw[step=1cm,gray,very thin] (0,0) grid (8,1);
        \draw[blue,very thick] (-.025,-.025) rectangle (8.025,1.025);
        \node at (.5,.5) {$2$};
        \node at (1.5,.5) {$5$};
        \node at (2.5,.5) {$3$};
        \node at (3.5,.5) {$1$};
        \node at (4.5,.5) {$8$};
        \node at (5.5,.5) {$7$};
        \node at (6.5,.5) {$6$};
        \draw[fill=yellow] (7.1,0.1) rectangle (7.9,0.9);
        \node at (7.5,.5) {\textcolor{black}{{$4$}}};
    \end{tikzpicture} 
\\
\vspace{.1in}

    \begin{tikzpicture}
        \draw[step=1cm,gray,very thin] (0,0) grid (8,1);
        \draw[blue,very thick] (-.025,-.025) rectangle (3.025,1.025);
        \draw[blue,very thick] (3.975,-.025) rectangle (8.025,1.025);
        
        \node at (.5,.5) {$2$};
        \node at (1.5,.5) {$3$};
        \node at (2.5,.5) {$1$};
        \draw[fill=bluishgreen] (3.1,0.1) rectangle (3.9,0.9);
        \node at (3.5,.5) {\textbf{$4$}};        
        \node at (4.5,.5) {$5$};
        \node at (5.5,.5) {$8$};
        \node at (6.5,.5) {$7$};
        \draw[fill=yellow] (7.1,0.1) rectangle (7.9,0.9);
        \node at (7.5,.5) {\textcolor{black}{${6}$}};
    \end{tikzpicture}
\\
\vspace{.1in}
    \begin{tikzpicture}
        \draw[step=1cm,gray,very thin] (0,0) grid (8,1);
        \draw[fill=bluishgreen] (.1,0.1) rectangle (.9,0.9);
        \draw[fill=bluishgreen] (.1,0.1) rectangle (.9,0.9);
        \node at (.5,.5) {$1$};
        \draw[fill=bluishgreen] (1.1,0.1) rectangle (1.9,0.9);
        \node at (1.5,.5) {$2$};
        \draw[fill=bluishgreen] (2.1,0.1) rectangle (2.9,0.9);
        \node at (2.5,.5) {$3$};
        \draw[fill=bluishgreen] (3.1,0.1) rectangle (3.9,0.9);
        \node at (3.5,.5) {\textbf{$4$}};        
        
        \draw[blue,very thick] (3.975,-.025) rectangle (5.025,1.025);
        \draw[blue,very thick] (5.975,-.025) rectangle (8.025,1.025);
        
        \node at (4.5,.5) {$5$};
        \draw[fill=bluishgreen] (5.1,0.1) rectangle (5.9,0.9);
        \node at (5.5,.5) {$6$};
        \node at (6.5,.5) {$8$};
        \node at (7.5,.5) {$7$};
    \end{tikzpicture}
    \\
\vspace{.1in}
    \begin{tikzpicture}
        \draw[step=1cm,gray,very thin] (0,0) grid (8,1);
        \draw[fill=bluishgreen] (.1,0.1) rectangle (.9,0.9);
        \node at (.5,.5) {$1$};
        \draw[fill=bluishgreen] (1.1,0.1) rectangle (1.9,0.9);
        \node at (1.5,.5) {$2$};
        \draw[fill=bluishgreen] (2.1,0.1) rectangle (2.9,0.9);
        \node at (2.5,.5) {$3$};
        \draw[fill=bluishgreen] (3.1,0.1) rectangle (3.9,0.9);
        \node at (3.5,.5) {$4$};
        \draw[fill=bluishgreen] (4.1,0.1) rectangle (4.9,0.9);
        \node at (4.5,.5) {$5$};
        \draw[fill=bluishgreen] (5.1,0.1) rectangle (5.9,0.9);
        \node at (5.5,.5) {$6$};
        \draw[fill=bluishgreen] (6.1,0.1) rectangle (6.9,0.9);
        \node at (6.5,.5) {$7$};
        \draw[fill=bluishgreen] (7.1,0.1) rectangle (7.9,0.9);
        \node at (7.5,.5) {$8$};
    \end{tikzpicture}

    \caption{
    Using Quicksort to sort the permutation 25318764. The rightmost subarray element is chosen as the pivot, highlighted in yellow. 
    The algorithm partitions the array into elements larger and smaller than the pivot. 
    If the subarray is length 3 or smaller, it sorts by brute force. 
    Otherwise, it recurses on the resulting subarrays, marked in blue.
    Correctly ordered elements are highlighted in green.
    }
    \label{fig:quicksort-ex}
\end{figure}

Given an array of size $n$, the \emph{overall} number of pivot comparisons made by \texttt{Quicksort} (including all recursive calls) crucially depends on the choice of pivot elements.
For example, if the algorithm happens to repeatedly select the smallest element as the pivot element, then the input array of any particular recursive call is only one unit smaller than that of the preceding call. 
It follows that, in the worst case, the algorithm makes $\sum_{i=1}^{n} (n - i) = \frac{n(n-1)}{2}= O(n^2)$ pivot comparisons\footnote{Given $f, g: \mathbb{N} \rightarrow \mathbb{N} $, we say $f = O(g)$ is there exist $k \in \mathbb{N}$ and $C \in \mathbb{R}_{> 0}$ such that $f(n) \leq C \cdot g(n)$ for all $n \geq k$.}.
However, if we assume the original array is ordered uniformly at random (i.e., it is in any given ordering with probability $1/n!$), the expected number of pivot comparisons is $O(n \log n)$.
This bound can be obtained as follows:
Let $Q_n$ be the expected number of pivot comparisons performed by \texttt{Quicksort} given an array of size $n$ ordered uniformly at random.
Then, regardless of the pivot selection strategy (e.g., selecting the right-most element), \emph{any} choice of pivot element is equally likely to be the $k$th smallest and we have
\begin{equation}
\label{eq: Q_n recursive}
    Q_n = (n-1) + \frac{1}{n}\left(\sum_{k=1}^{n} Q_{k-1} + Q_{n - k} \right).
\end{equation}
This recursive relation unravels (see~\cite[Chapter~4.7]{cameron1994combinatorics} for details) to obtain
\begin{equation}
\label{eq: Q_n formula}
    Q_n = 2(n + 1) H_n - 4n = O(n \log n),
\end{equation}
where $H_n$ is the $n$th harmonic number, i.e.~$H_n \coloneqq \sum_{i=1}^n \frac1i$.

Note that while $Q_n$ is not an integer sequence, $A_n=n!Q_n$ is an integer sequence and denotes the \emph{total} number of comparisons performed by \texttt{Quicksort} given \emph{all} possible orderings of an array of size $n$, (see \cite[\href{https://oeis.org/A288964}{A288964}]{OEIS}).

In fact, by the definition of expectation, $A_n$ is given by
\begin{equation}
\label{eq: A_n formula}
    A_n = n! Q_n=n!\left[2(n + 1) H_n - 4n\right].
\end{equation}
\section{Main Result}
In this work, we give a new and surprising connection between the sequence $A_n$ and a certain class of ``parking objects.''
To describe this connection, we begin with some preliminaries on parking functions.
For $n \in \mathbb{N}$ we let $[n] \coloneqq \{1, 2, \ldots, n\}$.
Suppose there are $n$ cars, numbered $1, 2, \ldots, n$, which enter (in order) a one-way street with $n$ parking spots.
Each car $i \in [n]$ has a preferred parking spot $p_i \in [n]$.
Car $i$ parks in the first unoccupied spot numbered greater than or equal to its preference, if such a spot exists.
If no such spot exists, car $i$ is unable to park.
We say $\alpha=(p_1,p_2,\ldots,p_n)$ is a \emph{preference list} of length $n$.
If $\alpha$ allows \emph{all} cars to park, we say it is a \emph{parking function} of length $n$. 
For example, as illustrated in Figure \ref{fig:pf example}, $\alpha=(3,1,1,2)$ is a parking function of length 4, where car 1 parks in spot 3, car 2 parks in spot 1, car 3 parks in spot 2, and car 4 parks in spot 4, while $\alpha=(2,1,4,4)$ is not a parking function as car 4 is unable to park.
Throughout we let $[n]^n$ denote the set of preference lists of length $n$ and $\PF_n \subseteq [n]^n$ denote the set of parking functions of length $n$.

Parking functions were introduced by Konheim and Weiss~\cite{Konheim_Weiss} in their 1966 study of hashing functions.
They showed that $|\PF_n|=(n+1)^{n-1}$.
Since then, there has been much interest in parking functions and related combinatorial objects, which have been shown to have connections to a wide variety of areas such as hyperplane arrangements~\cite{Hyperplanes}, polyhedral combinatorics~\cite{Polytopes, Flow}, and the Tower of Hanoi game~\cite{Tower}, to list a few. 
Many have studied variants of parking functions to include cars that move backwards when they find their preferred space occupied~\cite{Naples, NaplesCount}, cars having a set of preferences rather than a single preference~\cite{Interval, Subset}, cars bumping earlier cars from their preferred spot~\cite{MVP}, cars having a variety of lengths~\cite{Happ,Adeniran,chen2022permutation,franks2023counting}, and cars arriving after construction on the street made certain spots unavailable for parking \cite{Completions}.
For a technical survey on parking functions we recommend~\cite{YanHEC}, and for those interested in open problems related to parking functions we recommend~\cite{Choose}.

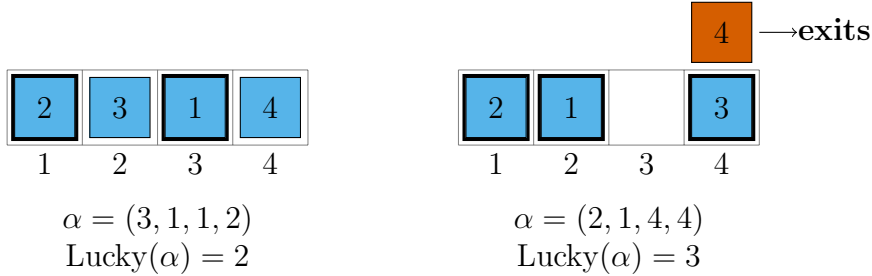
\begin{figure}
    \centering
    \begin{tikzpicture}
        \draw[black,very thick] (0.075,0.075) rectangle (0.925,0.925);
        \draw[black,very thick] (2.075,0.075) rectangle (2.925,0.925);
        \draw[step=1cm,gray,very thin] (0,0) grid (4,1);
        \draw[fill=skyblue] (.1,0.1) rectangle (.9,0.9);
        \node at (.5,.5) {\textbf{$2$}};
        \node at (.5,-.25) {\textbf{$1$}};
        \draw[fill=skyblue] (1.1,0.1) rectangle (1.9,0.9);
        \node at (1.5,.5) {\textbf{$3$}};        
        \node at (1.5,-.25) {\textbf{$2$}};        
        \draw[fill=skyblue] (2.1,0.1) rectangle (2.9,0.9);
        \node at (2.5,.5) {\textbf{$1$}};        
        \node at (2.5,-.25) {\textbf{$3$}};        
        \draw[fill=skyblue] (3.1,0.1) rectangle (3.9,0.9);
        \node at (3.5,.5) {\textbf{$4$}};        
        \node at (3.5,-.25) {\textbf{$4$}};      
        \node at (2,-1) {$\alpha=(3,1,1,2)$};
        \node at (2,-1.5) {$\Lucky(\alpha)=2$};
        
        \draw[black,very thick] (6.075,0.075) rectangle (6.925,0.925);
        \draw[black,very thick] (7.075,0.075) rectangle (7.925,0.925);
        \draw[black,very thick] (9.075,0.075) rectangle (9.925,0.925);
        \draw[step=1cm,gray,very thin] (6,0) grid (10,1);
        \draw[fill=skyblue] (6.1,0.1) rectangle (6.9,0.9);
        \node at (6.5,.5) {\textbf{$2$}};
        \draw[fill=skyblue] (7.1,0.1) rectangle (7.9,0.9);
        \node at (7.5,.5) {\textbf{$1$}};        
        \draw[fill=skyblue] (9.1,0.1) rectangle (9.9,0.9);
        \node at (9.5,.5) {\textbf{$3$}};        
        \draw[fill=vermillion] (9.1,1.1) rectangle (9.9,1.9);
        \node at (9.5,1.5) {\textbf{$4$}};
        \node at (6.5,-.25) {\textbf{$1$}};
        \node at (7.5,-.25) {\textbf{$2$}};
        \node at (8.5,-.25) {\textbf{$3$}};
        \node at (9.5,-.25) {\textbf{$4$}};
        \draw[->] (10,1.5) to (10.5,1.5) {};
        \node at (11,1.55) {\textbf{exits}};
        \node at (8,-1) {$\alpha=(2,1,4,4)$};
        \node at (8,-1.5) {$\Lucky(\alpha)=3$};
    \end{tikzpicture}
    \caption{
    Examples of parking preference lists, with cars that park highlighted in blue, lucky cars marked in black, and cars that fail to park highlighted in red.
    On the left, the preference list $\alpha = (3,1,1,2)$ is a parking function and has two lucky cars.
    On the right, the preference list $\alpha = (2,1,4,4)$ is not a parking function and has three lucky cars.
    }
    \label{fig:pf example}
\end{figure}

We study a special statistic of preference lists defined as follows.
If car $i$ parks in its preferred spot $p_i$, we say it is \emph{lucky}.
For a preference list $\alpha=(p_1,p_2,\ldots,p_n)\in[n]^n$ we consider the statistic 
\begin{align*}
    \Lucky(\alpha) \coloneqq |\{i\in[n]:\mbox{car $i$ is lucky}\}|,
\end{align*}
which simply counts the number of lucky cars in the preference list $\alpha$.
Note that we do not require $\alpha$ to be a parking function. For the parking preferences in Figure~\ref{fig:pf example}, note that  $\Lucky(3,1,1,2)=2$ and $\Lucky(2,1,4,4)=3$.
In fact, $\Lucky(\alpha) \geq 1 $ for all $\alpha \in [n]^n$, as the first car always parks in its preferred spot (and hence car 1 is always lucky).
If $S_n$ denotes the set of permutations of $[n]$, then $\Lucky(\alpha) = n$ for all $\alpha \in S_n$, as all cars have distinct preferences and park precisely in their preferred spot (and hence are all lucky).

Gessel and Seo studied the lucky statistic, and gave the following generating function~\cite[Theorem 10.1]{GesselSeo}:
\begin{align}
\sum_{\alpha\in\PF_{n}}
q^{\Lucky(\alpha)}
=q\prod_{i=1}^n(i+(n-i+1)q).\label{gessel seo poly}
\end{align}
In what follows, we let 
\[\mathcal{L}_n=\{\alpha\in[n]^n: \Lucky(\alpha)=n-1\}\]
and give a formula for $L_n=|\mathcal{L}_n|$; the number of preference lists in $[n]^n$ with $n-1$ lucky cars. 
Note that unlike \eqref{gessel seo poly}, where the sum is over parking functions of length $n$, $\mathcal{L}_n$ is not restricted to elements of $\PF_n$.

Our main result shows that $L_n = A_n$, where $A_n$ is given by \eqref{eq: A_n formula} and counts the \emph{total} number of comparisons performed by \texttt{Quicksort} given \emph{all} possible orderings of an array of size~$n$. 
\begin{theorem}
\label{thm:equal}
If $n\geq 2$, then
$
    L_n=A_n=n!\, Q_n.
$
\end{theorem}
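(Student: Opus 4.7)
The plan is to compute $L_n$ directly by partitioning the preference lists in $\mathcal{L}_n$ according to which car is the unique unlucky one, and then simplify the resulting sum into the closed form \eqref{eq: A_n formula}. Since car $1$ is always lucky, the unique unlucky car must be some $k \in \{2, \ldots, n\}$, so $L_n = \sum_{k=2}^{n} N_k$, where $N_k$ counts the preference lists in $\mathcal{L}_n$ whose unlucky car is car $k$.

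Fixing such a $k$, I would let $S = \{p_1, \ldots, p_{k-1}\}$ denote the set of spots chosen by the first $k-1$ (lucky) cars; these preferences are distinct and admit $(k-1)!$ orderings for each $(k-1)$-subset $S \subseteq [n]$. For car $k$ to be unlucky we need $p_k \in S$, and then either car $k$ parks at the smallest $s^* \in [n] \setminus S$ with $s^* \geq p_k$ (\textbf{Case A}), or $\{p_k, p_k+1, \ldots, n\} \subseteq S$ and car $k$ exits (\textbf{Case B}). Writing $T_k$ for the set of spots occupied after car $k$ processes, the requirement that cars $k+1, \ldots, n$ all be lucky reduces to choosing $p_{k+1}, \ldots, p_n$ as any ordered sequence of $n-k$ distinct elements of $[n] \setminus T_k$. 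This contributes $(n-k)!$ orderings in Case A (where $|T_k| = k$) and $(n-k+1)!$ orderings in Case B (where $|T_k| = k-1$).

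The main technical step is to count the admissible pairs $(S, p_k)$ in each case. Case B is parameterized by $p_k = m \in \{n-k+2, \ldots, n\}$ together with a choice of $S \setminus \{m, m+1, \ldots, n\} \subseteq [m-1]$ of the appropriate size, and a hockey-stick identity collapses the resulting double sum to $\binom{n}{k-2}$. Case A is then the complement of Case B inside the $(k-1)\binom{n}{k-1}$ pairs with $p_k \in S$. Combining both contributions, the binomial and factorial weights simplify to yield
\[
N_k \;=\; \frac{2(k-1)\, n!}{\,n-k+2\,}.
\]

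Finally, reindexing by $j = n - k + 2$ turns $L_n = \sum_{k=2}^{n} N_k$ into $2\,n! \sum_{j=2}^n \frac{n+1-j}{j}$, which splits via the definition of $H_n$ into $n!\bigl[2(n+1)H_n - 4n\bigr] = A_n$. I expect the hockey-stick step to be the most delicate part of the argument, since both the index bounds on $m$ and the identification of the binomial coefficients as a single hockey-stick sum need careful tracking; once $N_k$ is in hand, the remainder is a direct harmonic-number manipulation.
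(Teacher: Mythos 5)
Your argument is correct, and it follows the same overall route as the paper's alternative proof (Theorem~\ref{thm:lucky count} and Section~\ref{sec: alternative}): fix the unique unlucky car $k\in\{2,\ldots,n\}$, split according to whether it parks or exits, count the configurations of the earlier lucky cars, and reduce everything to the harmonic-number expression $n!\left[2(n+1)H_n-4n\right]=A_n$, taking the closed form \eqref{eq: Q_n formula} for $Q_n$ as known, just as the paper does. Where you genuinely differ is in the inner count: the paper parameterizes both cases by the number $\ell$ of early cars parked contiguously at the end of the street, producing the double sums \eqref{eq:count1}--\eqref{eq:count2} and later invoking the identity \eqref{identity} to collapse them, whereas you parameterize by the set $S$ of spots taken by cars $1,\ldots,k-1$ together with $p_k$, handle the exit case by a hockey-stick sum over $m=p_k$ (the exit condition being exactly $\{m,\ldots,n\}\subseteq S$, which does give $\binom{n}{k-2}$ admissible pairs for $m\in\{n-k+2,\ldots,n\}$), and get the parking case for free by complementation inside the $(k-1)\binom{n}{k-1}$ pairs with $p_k\in S$. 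This yields the closed per-car count $N_k=\frac{2(k-1)\,n!}{n-k+2}$ more directly than the paper's route; your simplification and the reindexed sum $2n!\sum_{j=2}^{n}\frac{n+1-j}{j}=n!\left[2(n+1)H_n-4n\right]$ both check out, and your computation even recovers the equinumerosity of the two cases noted in the paper's corollary, since each case contributes $(k-1)\,n!/(n-k+2)$. The only thing your approach does not give is the paper's primary proof via the second-order recurrence $f_n=2nf_{n-1}-n(n-1)f_{n-2}+2(n-1)!$, which the paper verifies separately for $A_n$ and $L_n$; as a direct proof of $L_n=A_n=n!\,Q_n$, however, your argument is complete.
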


\begin{proof}
It suffices to show that both $A_n$ and $L_n$ satisfy the second order recurrence relation
\begin{align}
    f_n=2nf_{n-1}-n(n-1)f_{n-2}+2(n-1)!
\hspace{.2in}\mbox{with $f_0=f_1=0$.}
\label{recurrence}        
\end{align}
Algebraic manipulations involving telescoping sums show that \eqref{eq: Q_n recursive} implies $n Q_n=(n+1)Q_{n-1}+2(n-1)$ 
with $Q_0=0$.
As $A_n=n!Q_n$ we have that $A_n =(n+1)A_{n-1}+2(n-1)(n-1)!$ with $A_0=0$.
Then,
\begin{align*}
    A_n 
    &=2nA_{n-1}-(n-1)A_{n-1}+2(n-1)(n-1)!\\
    &=2nA_{n-1}-(n-1)(nA_{n-2}+2(n-2)(n-2)!)+2(n-1)(n-1)!\\
    &=2nA_{n-1}-n(n-1)A_{n-2}+2(n-1)!
\end{align*}
with $A_0=A_1=0$, as desired.

We now show that $L_n$, which is the number of preference lists in $[n]^n$ with exactly $n-1$ lucky cars, also satisfies \eqref{recurrence}. 
Note that each $\alpha \in \mathcal{L}_n$ contains exactly one duplicate entry; all other entries are pairwise distinct. 
Hence, there is exactly one pair of cars that share a preference; call this pair of cars the \emph{competing cars}.
This also implies that there is exactly one parking spot that is not preferred by any car; call this parking spot the \emph{undesired spot}. 
Note that 
\begin{align}
    L_n&=|N_n|+|M_n|,\label{eq:rec 1}
\end{align}
where 
\begin{align*}
    N_n&=\{\alpha\in\mathcal{L}_n: \mbox{car 1 is not a competing car}\}, \mbox{ and }\\
    M_n&=\{\alpha\in\mathcal{L}_n: \mbox{car 1 is a competing car}\}.
\end{align*}

We begin by enumerating the elements of $N_n$. To do so we use the following fact. 
If car $1$ is not a competing car and prefers spot $a_1$, then the remaining $n-1$ cars behave as if car 1 preferred spot $n$.
Namely, by shifting the preferences so that $a_i = a_i - \mathbf{1}_{\{a_i > a_1\}}$ for each $i \in [n] \setminus \{1\}$, where $\mathbf{1}$ denotes the indicator function, we note there are $L_{n-1}$ 
preference lists that satisfy 
the required condition.
As there are $n$ options for the preference $a_1$ of car $1$ we have that
\begin{align}
    |N_n|&=nL_{n-1}\label{N_n}.
\end{align}

Now, if $\alpha=(a_1,a_2,\ldots,a_n)\in M_n$, then car $1$ is a competing car.
This implies that there exists an index $2\leq j\leq n$ such that $a_1=a_j$, while all other entries are pairwise distinct as well as distinct from this value. 
We now consider the cases $j=2$ and $3\leq j\leq n$ separately:
\begin{itemize}
    \item 
    If $j=2$, then $a_1=a_2$. 
    If $a_1=a_2=n$, then cars $3,4,\ldots,n$ have pairwise distinct preferences among the first $n-1$ parking spots. 
    Hence, if $a_1=a_2=n$, there are $(n-1)!$ possibilities for the remaining entries $a_3,a_4,\ldots,a_n$ in $\alpha$. 
    If $a_1=a_2=k<n$, then spot $k+1$ is the undesired spot and cars $3,4,\ldots,n$ have pairwise distinct preferences among the $n-1$ parking spots $\{1,2,\ldots,k-1,k+2,\ldots,n\}$.
    Hence, if $a_1=a_2=k<n$, there are $(n-1)!$ possibilities for the remaining entries $a_3,a_4,\ldots,a_n$ in $\alpha$.
    Together, these mutually exclusive cases contribute $2(n-1)!$ toward the total count.
\item 
    If $3\leq j\leq n$, then car $2$ is not a competing car. 
    Note that there are $n$ possible preferred spots for car $2$. 
    Moreover, the preference list $(a_1,a_3,a_4,\ldots,a_n)$ contains the two competing cars (with the first car being in the competing pair), and all of the remaining $n-1$ cars have preferences in the set $[n]\setminus\{a_2\}$. 
    By shifting the preferences so that $a_i = a_i - \mathbf{1}_{\{a_i > a_2\}}$ for each $i \in [n] \setminus \{2\}$, we note that the total number of such preferences for cars $1,3,4,\ldots,n$ is given by $|M_{n-1}|$.
    This case contributes $n|M_{n-1}|$ toward the total count.
\end{itemize}
Therefore,
\begin{align}
|M_n|&=n|M_{n-1}|+2(n-1)!.\label{M_n}
\end{align}
Substituting \eqref{N_n} and \eqref{M_n} into \eqref{eq:rec 1} yields
\begin{align}
    L_n=nL_{n-1}+n|M_{n-1}|+2(n-1)!.\label{eq:recurrence for Ln}
\end{align}
Subtracting $2nL_{n-1}$ from both sides of \eqref{eq:recurrence for Ln} yields
\begin{align*}
    L_n-2nL_{n-1}&=-nL_{n-1}+n|M_{n-1}|+2(n-1)!\\
    &=-n((n-1)L_{n-2}+|M_{n-1}|)+n|M_{n-1}|+2(n-1)!\\
    &=-n(n-1)L_{n-2}+2(n-1)!
\end{align*}
which upon rearranging is the desired recurrence relation
\begin{align*}
    L_n=2nL_{n-1}-n(n-1)L_{n-2}+2(n-1)!
\end{align*}
with $L_0=L_1=0$. 
\end{proof}
\begin{remark}
In Section~\ref{sec: alternative} we give an alternative proof of Theorem \ref{thm:equal}; we establish a rather technical binomial identity which shows $L_n=A_n$ for $n \geq 2$.
However, given the proofs of Theorem \ref{thm:equal}, it is natural to ask for a more fine-grained relationship between lucky cars and pivot comparisons in the \texttt{Quicksort} executions. 
Given the many algebraic manipulations used to arrive at either proof of Theorem \ref{thm:equal}, it is unclear how these two (or objects derived from them) are related. 
We welcome an analysis of this relationship.
\end{remark}

\section{Alternative Proof of Main Result}
\label{sec: alternative}

In this section we present an alternative proof of our main result. 
This approach relies on the following combinatorial enumeration.
\begin{theorem}\label{thm:lucky count}
If $n\geq 2$, then the number of parking preference lists $[n]^n$ with exactly $n-1$ lucky cars is given by
\begin{align}
L_n&=2\sum_{i=2}^n\sum_{\ell=0 }^{i-1}\ell(n-i+1)\frac{(i-1)!(n-\ell-1)!}{(i-1-\ell)!}.\label{our formula count}
\end{align}
\end{theorem}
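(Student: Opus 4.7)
The plan is to directly enumerate $\mathcal{L}_n$ by analyzing the structure of preference lists with exactly $n-1$ lucky cars, and then to algebraically reshape the resulting double sum to match \eqref{our formula count}.

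First, I set up the combinatorial structure. For each $\alpha\in\mathcal{L}_n$, the unique unlucky car occupies some position $q\in\{2,\ldots,n\}$; its preference $v$ coincides with that of a single earlier ``partner'' car at position $p<q$; and the remaining $n-2$ preferences are pairwise distinct, collectively missing exactly one value $u\in[n]$ (the ``undesired spot''). I split $\mathcal{L}_n=\mathcal{L}_n^A\sqcup\mathcal{L}_n^B$ by whether $u>v$ (Case A: the unlucky car parks at $u$) or $u<v$ (Case B: the unlucky car fails to park). A short analysis of the parking dynamics shows that in Case A (respectively, Case B) every non-competing car preferring a spot in $\{v+1,\ldots,u-1\}$ (respectively, $\{v+1,\ldots,n\}$) must occupy one of the $q-2$ ``early'' positions in $\{1,\ldots,q-1\}\setminus\{p\}$, else a later lucky car would find its preferred spot occupied.

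Next, I count each case. Setting $\ell:=u-v-1$ in Case A and $\ell:=n-v$ in Case B, a routine count of the admissible $(v,u)$ pairs (each case yields $n-\ell-1$ of them) and the arrangements of non-competing cars (which split into $\binom{q-2}{\ell}\ell!$ placements of the constrained ``crucial'' preferences and $(n-2-\ell)!$ placements of the rest) shows that both cases contribute the same quantity per $(q,\ell)$. Multiplying by the $q-1$ choices of $p$ and summing over $\ell\in\{0,\ldots,q-2\}$, each case gives
\[
|\mathcal{L}_n^A|_q = |\mathcal{L}_n^B|_q = (q-1)!\sum_{\ell=0}^{q-2}\frac{(n-\ell-1)!}{(q-2-\ell)!},
\]
accounting for the leading factor of $2$ in \eqref{our formula count}.

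The final step is algebraic. Setting $T_\ell:=\frac{(q-1)!(n-\ell-1)!}{(q-1-\ell)!}$ and using $\frac{1}{(q-2-\ell)!}=\frac{q-1-\ell}{(q-1-\ell)!}$, the per-case contribution above equals $\sum_{\ell=0}^{q-1}(q-1-\ell)T_\ell$. I then verify
\[
\sum_{\ell=0}^{q-1}(q-1-\ell)T_\ell \;=\; (n-q+1)\sum_{\ell=0}^{q-1}\ell\, T_\ell \;=\; \frac{(q-1)\,n!}{n-q+2},
\]
by showing---via two applications of the hockey stick identity $\sum_{j=0}^{m}\binom{r+j}{j}=\binom{r+m+1}{m}$, applied after the substitution $k=q-1-\ell$---that both sums collapse to the same closed form. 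Relabeling $i:=q$ and summing over $q\in\{2,\ldots,n\}$ reproduces \eqref{our formula count}.

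I expect the principal obstacle to be precisely this final algebraic matching. The natural direct count produces an inner sum (indexed via $\ell=u-v-1$ or $n-v$) that does \emph{not} correspond term-by-term to the inner sum in \eqref{our formula count}, since the latter carries an extra factor of $\ell$ and a different Pochhammer-like denominator $(q-1-\ell)!$ in place of $(q-2-\ell)!$. No clean combinatorial bijection appears to explain the agreement at the level of individual summands; the equality is established only after each sum is evaluated in closed form via hockey stick.
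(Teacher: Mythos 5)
Your proposal is correct: the structural claims (unique duplicated value $v$, unique missing value $u$, the dichotomy ``$u>v$ iff the unlucky car parks,'' and the requirement that all non-competing cars preferring spots strictly between $v$ and the landing spot arrive before the unlucky car) all hold, the per-$(q,\ell)$ counts are right, and the closed form $\sum_{\ell=0}^{q-1}(q-1-\ell)T_\ell=(n-q+1)\sum_{\ell=0}^{q-1}\ell T_\ell=\frac{(q-1)\,n!}{n-q+2}$ checks out (I verified it against small cases and against the paper's own simplification of \eqref{our formula count}). The skeleton matches the paper's proof---condition on the position of the unlucky car, split by whether it parks or exits, observe the two cases are equinumerous to get the leading factor of $2$---but the inner parametrization differs in a way worth noting. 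The paper takes $\ell$ to be the length of the maximal contiguous block of occupied spots at the end of the street and lets $v$ range over that block; with that choice the ``fails to park'' count \eqref{eq:count1} collapses \emph{term by term} to the inner sum of \eqref{our formula count} (indeed $\ell\binom{i-1}{\ell}\ell!\binom{n-\ell-1}{i-1-\ell}(i-1-\ell)!\binom{n-i+1}{n-i}(n-i)!=\ell(n-i+1)\frac{(i-1)!(n-\ell-1)!}{(i-1-\ell)!}$), so no summation identity is needed for that half. Your parametrization by the pair $(v,u)$ instead produces the sum $\sum_{\ell=0}^{q-2}\frac{(q-1)!\,(n-\ell-1)!}{(q-2-\ell)!}$, which, as you correctly anticipate, does not match summand-by-summand and must be reconciled via two hockey-stick evaluations. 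What your route buys is a cleaner justification of the factor of $2$: the two cases match transparently per $(q,\ell)$ (each contributing $n-\ell-1$ admissible pairs $(v,u)$ and identical arrangement counts), whereas the paper merely asserts that \eqref{eq:count1} and \eqref{eq:count2} are equal ``through straightforward algebraic manipulations.'' What it costs is the extra closed-form evaluation at the end, which the paper's parametrization avoids for Case 1. To turn your sketch into a full proof you should spell out the ``short analysis of the parking dynamics'' (in particular, that every car before position $q$ is automatically lucky and that the unlucky car must land precisely on $u$ or exit, since landing anywhere else would render a later car unlucky), but there is no gap in the argument.
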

\begin{proof}
Assume that car $i$ is the unlucky car, so $1<i\leq n$. 
Note that either car $i$ is unlucky and is unable to park, or car $i$ is unlucky and is able to park.

In the first case, there must be a subset of the cars $1$ through $i-1$ that park contiguously at the end of the street. 
Let $\ell$ denote the number of such cars and note $1\leq \ell\leq i-1$.
Then the only way for car $i$ to be unlucky and unable to park is if car $i$ has preference in the spots $n-\ell+1,n-\ell+2,\ldots,n$, of which there are $\ell$ options. 
Now we must select $\ell$ cars among the cars $1$ through $i-1$ to park contiguously at the end of the street. 
We can do this in $\binom{i-1}{\ell}$ ways, and we can permute their parking order among those spots in $\ell!$ ways so that they remain lucky. 
Now the remaining $i-1-\ell$ non-selected cars must park in some of the spots numbered $1$ through $n-\ell-1$. 
Note they cannot park in spot $n-\ell$ as that would mean that $\ell+1$ cars are parked contiguously at the end of the street, and we know we only have $\ell$ such cars. 
Then, there are $\binom{n-\ell-1}{i-1-\ell}$ ways to select their preferences, and we can permute their parking order among those spots in $(i-1-\ell)!$ ways so that they remain lucky. 
At this point we have accounted for cars $1$ through $i-1$ being lucky and car $i$ being unlucky and unable to park.
It remains for us to consider the preference for cars $i+1$ through $n$, all of which must also be lucky and can therefore only have preference among the remaining available spots on the street. 
Since car $i$ is unable to park and $i-1$ cars are parked on the street, we have $n-(i-1)$ unoccupied spots remaining. 
Then the remaining cars, of which there are $n-i$, can select among those open spots in $\binom{n-i+1}{n-i}$ ways, and we can permute their parking order among those spots in $(n-i)!$ ways so that they remain lucky. 
Thus the total count of parking preferences ensuring that car $i$ is unlucky and unable to park, while all of the other cars are lucky is given by
\begin{align}
& \sum_{i=2}^n\sum_{\ell=1}^{i-1}\ell\binom{i-1}{\ell}\ell!\binom{n-\ell-1}{i-1-\ell}(i-1-\ell)!\binom{n-i+1}{n-i}(n-i)!.\label{eq:count1}
\end{align}

In the second case, there can still be a subset of the cars $1$ through $i-1$ which park contiguously at the end of the street. 
Let $\ell$ denote the number of such cars and note $0\leq \ell\leq i-1$. 
Then the only way for car $i$ to be unlucky and park on the street is if car $i$ has preference in the spots not occupied by the cars parked in the last $\ell$ spots at the end of the street.
Now we must select $\ell$ cars among the cars $1$ through $i-1$ to park contiguously at the end of the street. 
We can do this in $\binom{i-1}{\ell}$ ways, and we can permute their parking order among those spots in $\ell!$ ways so that they remain lucky. 
Now the remaining $i-1-\ell$ non-selected cars must park in some of the spots numbered $1$ through $n-\ell-1$. 
Note they cannot park in spot $n-\ell$ as that would mean that $\ell+1$ cars are parked contiguously at the end of the street, and we know we only have $\ell$ such cars. 
Then, there are $\binom{n-\ell-1}{i-1-\ell}$ ways to select their preferences, and then we can permute those in $(i-1-\ell)!$ ways. 
At this point we have accounted for cars $1$ through $i-1$ being lucky with $\ell$ of those cars parking at the end of the street. 
Now for car $i$ to be unlucky and able to park, its parking preference must be one of the $i-1-\ell$ spots occupied by the $i-1-\ell$ cars not parked at the end of the street.

It remains for us to consider the preference for cars $i+1$ through $n$, all of which must also be lucky and can therefore only have preference among the remaining available spots on the street. 
Since cars $1$ through $i$ parked on the street, we have $n-i$  unoccupied spots remaining.
Then the remaining $n-i$ must choose among those spots and they can do so in $(n-i)!$ ways.
Thus the total count of parking preferences ensuring that car $i$ is unlucky and able to park, while all of the other cars are lucky is given by
\begin{align}
\sum_{i=2}^n\sum_{\ell=0}^{i-1}(i-1-\ell)\binom{i-1}{\ell}\ell!\binom{n-\ell-1}{i-1-\ell}(i-1-\ell)!(n-i)!.\label{eq:count2}
\end{align}

The result follows from the fact the two cases considered are disjoint. 
We remark that through straightforward algebraic manipulations involving binomials one can verify that in fact the counts are equinumerous, namely that the expressions in \eqref{eq:count1} and \eqref{eq:count2} are equal, and that their sum simplifies to the following expression:
\[L_n=2\sum_{i=2}^n\sum_{\ell=0 }^{i-1}\ell(n-i+1)\frac{(i-1)!(n-\ell-1)!}{(i-1-\ell)!}.\qedhere\]
\end{proof}
The proof of Theorem \ref{thm:lucky count} immediately implies the following.
\begin{corollary}
If $n\geq 2$, then
\begin{align*}
    \frac{L_n}{2} 
    &= |\{\alpha \in [n]^n \setminus \PF_n : \Lucky(\alpha) = n-1\}| \\
    &= |\{\alpha \in \PF_n : \Lucky(\alpha) = n-1|\}.
\end{align*}
\end{corollary}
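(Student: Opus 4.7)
The plan is to read off the corollary directly from the two cases in the proof of Theorem \ref{thm:lucky count}. Each $\alpha \in \mathcal{L}_n$ has a unique unlucky car, and that car either fails to park (Case 1) or parks on the street (Case 2). Since $\alpha$ is a parking function if and only if every car parks, the Case 1 enumeration \eqref{eq:count1} counts exactly $|\{\alpha \in [n]^n \setminus \PF_n : \Lucky(\alpha) = n-1\}|$, while the Case 2 enumeration \eqref{eq:count2} counts exactly $|\{\alpha \in \PF_n : \Lucky(\alpha) = n-1\}|$. Since these two sets partition $\mathcal{L}_n$, the corollary reduces to the algebraic identity \eqref{eq:count1} $=$ \eqref{eq:count2}.

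To prove this identity, I would first simplify each expression using $\binom{i-1}{\ell}\ell! = (i-1)!/(i-1-\ell)!$ and $\binom{n-\ell-1}{i-1-\ell}(i-1-\ell)! = (n-\ell-1)!/(n-i)!$, exposing the common weight $F(i,\ell) \coloneqq (i-1)!\,(n-\ell-1)!/(i-1-\ell)!$. After the substitution $\ell \mapsto \ell-1$ in the inner sum of \eqref{eq:count2}, both expressions range over the same pairs $(i,\ell)$ with $2 \leq i \leq n$ and $1 \leq \ell \leq i-1$, and their difference takes the form $\sum_{i,\ell}\bigl[\ell(n-i+1) - (n-\ell)\bigr]F(i,\ell)$.

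Next I would swap the order of summation so that $\ell$ is outside, and substitute $j = i-1-\ell$ to convert the inner sum into a linear combination of $\sum_{j=0}^{n-\ell-1}\binom{j+\ell}{\ell}$ and $\sum_{j=0}^{n-\ell-1} j\binom{j+\ell}{\ell}$, which by the hockey-stick identity evaluate in closed form to $\binom{n}{\ell+1}$ and $(\ell+1)\binom{n}{\ell+2}$, respectively. After simplifying, the outer sum becomes $n!\sum_{\ell=1}^{n-1}(a_\ell - a_{\ell+1})$ with $a_\ell = (\ell-1)(n-\ell)/(\ell+1)$. This telescopes to $n!(a_1 - a_n) = 0$, establishing the identity and completing the proof that each side equals $L_n/2$.

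The main obstacle is spotting the telescoping structure: the two hockey-stick reductions do not individually vanish at each $\ell$, so one must recognize the specific linear combination as a discrete derivative of $a_\ell$. A bijective proof pairing $\mathcal{L}_n \cap \PF_n$ with $\mathcal{L}_n \setminus \PF_n$ would be more illuminating, but the naive involution swapping the duplicate value and the missing value fails to preserve the property $\Lucky(\alpha) = n-1$, so any such bijection would need to be more subtle than a simple value-swap.
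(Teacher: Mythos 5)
Your proposal is correct and matches the paper's approach: the corollary is read off from the two-case decomposition in the proof of Theorem \ref{thm:lucky count}, identifying Case 1 with $\mathcal{L}_n \setminus \PF_n$ and Case 2 with $\mathcal{L}_n \cap \PF_n$, and then invoking the equality of \eqref{eq:count1} and \eqref{eq:count2}. The only difference is that the paper leaves that equality as a remark (``straightforward algebraic manipulations''), whereas you supply the verification explicitly via the hockey-stick identities and the telescoping sum, and your computation checks out.
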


We now complete our alternative proof.
\begin{proof}[Proof of Theorem \ref{thm:equal}]
By Theorem~\ref{thm:lucky count}, it suffices to show that the following holds for $n\geq 2$:
\begin{align*}
    2\sum_{i=2}^n\sum_{\ell=0 }^{i-1}\ell(n-i+1)\frac{(i-1)!(n-\ell-1)!}{(i-1-\ell)!}=n![2(n + 1) H_n - 4n].
\end{align*}
By induction on $i$ and $n$ with $i \leq n$, and using standard techniques manipulating binomials one can establish the following identity:
\begin{align}
    \sum_{\ell=0}^{i-1} \ell \binom{n-\ell-1}{i-\ell-1} 
    =\binom{n}{i - 2}.\label{identity}
\end{align}
Using identity \eqref{identity} in line \eqref{eq: using identity} below, we have
\begin{align}
    L_n 
    &=2\sum_{i=2}^n\sum_{\ell=0}^{i-1} \ell(n-i+1) \frac{(i-1)!(n-\ell-1)!}{(i-1-\ell)!} \nonumber \\
    &= 2\sum_{i=2}^n (n-i+1) (i-1)! (n - i)! \sum_{\ell=0}^{i-1} \ell \binom{n-\ell-1}{i-\ell-1} \nonumber \\
    &= 2\sum_{i=2}^n (n-i+1) (i-1)! (n - i)! \binom{n}{i - 2} \label{eq: using identity} \\
    &= 2\sum_{i=2}^n (i-1) \frac{n!}{(n - i + 2)} \nonumber \\
    &= 2n!\sum_{i=1}^{n-1} \frac{i}{(n + 1) - i} \nonumber \\
    &= 2n!\left[\sum_{i=1}^n \left(\frac{i}{(n + 1) - i}\right) - n\right] \nonumber \\
    &= 2n!\left[\left(\frac{1}{n} + \frac{2}{n-1} + \cdots + \frac{n}{1} \right)- n\right] \nonumber \\
    &= 2n!\left[\left(H_n + H_{n-1} + \cdots + H_1 \right)- n\right] \nonumber \\
    &= 2n!\left[\left((n+1)H_n - n\right) - n\right]\nonumber \\
    &= n! \left[2(n + 1) H_n - 4n \right] \nonumber \\
    &=n!Q_n \nonumber \\
    &=A_n, \nonumber
\end{align}
as desired.
\end{proof}

\bibliographystyle{plain}
\bibliography{bibliography}

\end{document}